\documentclass[a4paper,12pt]{amsart}
\usepackage{amssymb,amsmath}
\usepackage{hyperref}
\newcommand{\Jac}{{\rm Jac}}
\newcommand{\Q}{\mathbb{Q}}
\newcommand{\F}{\mathbb{F}}
\newcommand{\Proj}{\mathbb{P}}
\newcommand{\cl}[1]{{\mathcal #1}}
\newcommand{\Z}{\mathbb{Z}}
\newcommand{\ep}{\varepsilon}
\newtheorem{theorem}{Theorem}
\newtheorem{corollary}{Corollary}
\newtheorem{prop}{Proposition}
\newtheorem{lemma}{Lemma}

\newcommand{\HH}{{\rm H}}

\begin{document}
\title{Counting Rational Points on Cubic Curves}
\author[D.R. Heath-Brown \and D. Testa]{D.R. Heath-Brown \and D. Testa\\ \\Mathematical Institute, University of Oxford}
\begin{abstract}
We prove upper bounds for the number of rational points on non-singular cubic curves defined over the rationals.  
The bounds are uniform in the curve and involve the rank of the corresponding Jacobian.  The method used in the 
proof is a combination of the ``determinant method'' with an $m$-descent on the curve.
\end{abstract}
\address{\begin{minipage}{250pt}
Mathematical Institute \\
24--29 St.\ Giles' \\
OX1 3LB \\
Oxford, UK \\
\end{minipage}}
\date{}
\subjclass[2000]{11D25, %Cubic and quartic equations
11D45, %Counting solutions of Diophantine equations
11G05.%Elliptic curves over global fields
}

\keywords{Elliptic curves, determinant method, $m$-descent.}
\thanks{The second author was supported EPSRC grant number EP/F060661/1.}
\email{\begin{minipage}[t]{250pt}
rhb@maths.ox.ac.uk,\\testa@maths.ox.ac.uk 
\end{minipage}}
\maketitle
\section{Introduction and Statement of Results}

Let $F(X_0,X_1,X_2)\in\Z[X_0,X_1,X_2]$ be a non-singular
cubic form, so that $F=0$ defines a smooth plane cubic 
curve $C$ in $\Proj^2$. This
paper will be concerned with upper bounds for the counting function
\[N(B)=N(F;B)=\#\{P\in C(\Q):\, H(P)\le B\},\]
where the height function $H(P)$ is defined as
$\max \{ |x_0|,|x_1|,|x_2| \}$ when $P=[x_0,x_1,x_2]$ with coprime integer
values of $x_0,x_1,x_2$.  We are interested in obtaining upper bounds
for $N(B)$ which are uniform with respect to the curve $C$, or in which the
dependence on $C$ is explicit.

Providing that $C(\Q)$ is non-empty, we can view
$C$ as an elliptic curve and use the machinery of canonical heights.
When the rank $r$ is positive we will have
\begin{equation}\label{neron}
N(B)\sim c_F (\log B)^{r/2}
\end{equation}
as $B\rightarrow\infty$, as was shown by N\'{e}ron.
On the other hand, if $r=0$ we know that
$N(B)\le 16$ by Mazur's theorem~\cite{mazur} on torsion groups of
elliptic curves.  While this latter result is of course uniform over
all $F$, the estimate~(\ref{neron}) is certainly not.  However
Heath-Brown~\cite{asterisque} investigated what might be proved
uniformly, and showed that
\[
N(B) \ll \bigl( B \| F \| \bigr)^{Ar/\lambda}
\]
for some absolute constant $A$.
Here $\|F\|$ is the maximum modulus of the coefficients of $F$, and
$\lambda =\log N$, where $N$ is the conductor of the Jacobian ${\rm
  Jac}(C)$.  (This result comes from
combining the fourth and fifth displayed formulae 
on page~22 of~\cite{asterisque} with the third display on page~24.)  Indeed the
result may be simplified by calling on Theorem~4 of Heath-Brown's
work~\cite{ann}, which implies the following.
\begin{prop} \label{trenta}
For a plane cubic curve $C$ defined by a primitive integer form $F$, 
either $N(B)\le 9$ or $\|F\|\ll
B^{30}$.
\end{prop}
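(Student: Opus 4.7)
My plan is to prove the contrapositive: assuming $N(B) \geq 10$, I would show that $\|F\| \ll B^{30}$. The argument I have in mind actually gives the stronger $\|F\| \ll B^{27}$, and does not require the full strength of Theorem~4 of \cite{ann}; it is just a direct determinant argument combined with a Bezout dichotomy.

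I would begin by fixing ten rational points $P_1,\dots,P_{10}$ on $C$ of height at most $B$, and writing $P_i = [x_0^{(i)},x_1^{(i)},x_2^{(i)}]$ with coprime integer coordinates of absolute value at most $B$. Setting up the $10 \times 10$ matrix $M$ whose $(i,\alpha)$ entry is $m_\alpha(P_i)$, where $m_\alpha$ runs over a basis for the space of cubic monomials in $X_0,X_1,X_2$, I obtain a matrix of integer entries bounded by $B^3$ in absolute value. Since $F$ vanishes at each $P_i$, the primitive coefficient vector of $F$ is a non-zero element of the kernel of $M$, which forces $\mathrm{rank}(M) \leq 9$.

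The proof would then split into two cases. If $\mathrm{rank}(M) \leq 8$, the kernel of $M$ is at least two-dimensional, so I could choose an integer cubic form $G$ vanishing at every $P_i$ that is not proportional to $F$. A non-singular plane cubic is automatically irreducible, so $F$ and $G$ share no common factor, and Bezout's theorem bounds their common zeros by $3 \cdot 3 = 9$ --- contradicting the existence of the ten distinct points $P_i$. Hence the rank must in fact be $9$, and the kernel of $M$ is the line $\Q \cdot F$. From the adjugate identity $M \cdot \mathrm{adj}(M) = \det(M) \cdot I = 0$, every column of $\mathrm{adj}(M)$ lies in this kernel, and the rank being exactly $9$ ensures some $9 \times 9$ minor, and hence some column of $\mathrm{adj}(M)$, is non-zero. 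I would then bound each $9 \times 9$ minor by Hadamard's inequality by $(3B^3)^9 = 3^9 B^{27}$ and use the primitivity of $F$ to deduce that the rational scalar relating the chosen column of $\mathrm{adj}(M)$ to the coefficient vector of $F$ has absolute value at most one (it is the reciprocal of the g.c.d.\ of the entries of the column). The bound $\|F\| \leq 3^9 B^{27} \ll B^{30}$ would then follow.

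The main thing to watch will be the rank-$9$ case: one needs primitivity of $F$ to absorb the rational scalar coming out of the Cramer/adjugate computation, and needs $\mathrm{rank}(M) = 9$ (rather than anything weaker) to guarantee that $\mathrm{adj}(M) \neq 0$ so that one has a non-zero column to work with. Everything else is routine linear algebra together with Bezout's theorem and the standard fact that a non-singular plane cubic is irreducible.
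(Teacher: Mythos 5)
Your proof is correct, and it even gives the sharper bound $\|F\|\le 3^9B^{27}$. It is not, however, what the paper does: the paper offers no self-contained argument for Proposition~\ref{trenta}, but obtains it as an immediate consequence of Theorem~4 of \cite{ann}, a general dichotomy for primitive forms of degree $d$ in $n$ variables which is proved there by precisely the kind of determinant argument you set out (with $\binom{n+d-1}{n-1}=10$ cubic monomials in three variables here, leading to the stated exponent $30$). So your proposal amounts to inlining and specializing the cited result: the ten-point monomial matrix, the rank dichotomy, the adjugate/Cramer step with Hadamard's inequality, and primitivity of $F$ to absorb the rational scalar are exactly the ingredients behind \cite[Theorem~4]{ann}, while your Bezout step makes explicit the passage from ``a second cubic vanishes at all the points'' to the alternative $N(B)\le 9$. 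One caveat worth recording: the Bezout step needs $F$ to be geometrically irreducible, which you correctly deduce from non-singularity; the Proposition's wording omits this hypothesis, but it is a standing assumption of the paper and it is genuinely necessary, since for a reducible cubic such as $X_0Q(X_0,X_1,X_2)$ one has $N(B)\gg B^2$ while $\|F\|$ can be arbitrarily large, so the dichotomy would fail. With that standing assumption in place, your argument is complete and correct, and its only difference from the paper is that it proves directly (with a better exponent) what the paper imports by citation.
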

We therefore deduce that
\begin{equation}\label{b1}
N(B)\ll B^{Ar/\lambda}
\end{equation}
with a new absolute constant $A$.
It should be emphasized that this is completely uniform in the curve
$C$.  One expects that the ratio $r/\lambda$ tends to zero as
the conductor $N$ tends to infinity.  This is the ``Rank
Hypothesis'' of~\cite{asterisque}, which would follow if one knew the
truth of both the Generalized Riemann Hypothesis for the $L$-functions
of elliptic curves and the Birch--Swinnerton-Dyer Conjecture.  If we
assume the Rank Hypothesis, then we can deduce that
\[N(B)\ll_{\ep}B^{\ep}\]
for any fixed $\ep>0$, with an implied constant independent of the
curve $C$.

For the case in which the rank $r$ is small one can do rather better
by inserting a result of David~\cite[Corollary~1.6]{david} into this
analysis.  Thus one can show that
\begin{equation}\label{sd}
N(B) \ll (\log B)^{1+r/2}
\end{equation}
with complete uniformity.  
Therefore the non-uniform asymptotic formula~(\ref{neron}) may be replaced
by a uniform upper bound, at the expense of a power of 
$\log B$ only.
We prove~(\ref{sd}) in the appendix to
this paper.  The exponent may easily be replaced by $c+r/2$ with a
constant $c<1$, but our methods do not allow us to remove it entirely
when $r$ is small.

There is a second suite of results on $N(B)$ which have their origin
in work of Bombieri and Pila~\cite{BP}.  The latter was concerned with
general affine curves, and was adapted by 
Heath-Brown~\cite[Theorem~3]{ann} to handle projective curves.  For a projective
plane curve of arbitrary degree $d$ one obtains
\[
N(B)\ll_{d,\ep}B^{2/d+\ep},
\]
so that $N(B)=O_{\ep}(B^{2/3+\ep})$ in our case.  These results are
essentially best possible for general curves, in the sense that 
one has $N(B)\gg B^{2/d}$ for the singular curve $X_0^{d-1}X_1=X_2^d$.  
If one takes account of the height $\|F\|$ of the form $F$ (defined as
the maximum modulus of the coefficients of $F$), one can do slightly 
better, showing that
\[
N(B)\ll_{d,\ep}B^{\ep} \bigl( B^{2/d}\|F\|^{-1/d^2}+1 \bigr)
\]
if the form $F$ is primitive.  Thus for the cubic case one has
\begin{equation}\label{hsave}
N(B) \ll_{\ep} B^{\ep} \bigl( B^{2/3}\|F\|^{-1/9}+1 \bigr).
\end{equation}
These
results are due to Ellenberg and Venkatesh~\cite[Proposition~2.1]{EV}
(and independently, in unpublished work, to Heath-Brown).  The above
bounds allow a small saving over the exponent $2/d$ unless $B$ is
large compared with $\|F\|$, while in the remaining case an argument based on
the group structure of ${\rm Jac}(C)$ gives a sharper bound.  Thus
Ellenberg and Venkatesh~\cite[Theorem~1.1]{EV} were able to show that
for each degree $d$ there was a corresponding $\delta(d)>0$ such that
\[
N(B)\ll_d B^{2/d-\delta(d)}
\]
providing that the curve 
$F(X_0,X_1,X_2)=0$
is non-rational.  This is nice, since it shows that when $C$ has
positive genus $N(B)$ is
distinctly smaller than for the genus zero curve $X_0^{d-1}X_1=X_2^d$.  
In the case
of non-singular cubic curves, which is our concern here, Ellenberg and 
Venkatesh showed~\cite[\S 4.1]{EV} that
\[N(B)\ll B^{2/3-1/405}.\]
In unpublished work Salberger has given a rather different approach,
which replaces $1/405$ by $2/327$ in certain cases.

In summary then, we have two general approaches for smooth cubic
curves.  The first
uses the group structure on the corresponding elliptic curve, and has bad
uniformity in $C$, while the second applies to arbitrary
curves, and is therefore (almost) restricted to the exponent $2/d$
which one has for rational curves.  The result of the present paper
may be thought of as interpolating between these two types of result.
We shall prove the following theorem.
\begin{theorem}\label{one}
Let 
\[F(X_0,X_1,X_2)\in\Z[X_0,X_1,X_2]\]
be a non-singular
cubic form, so that $F=0$ defines a smooth plane cubic 
curve $C$.  Let $r$ be the rank of $\Jac(C)$.  Then
for any $B\ge 3$ and any positive integer $m$ we have
\[
N(B)\ll m^{r+2} \Bigl( \log^2B+B^{2/(3m^2)}\log B \Bigr)
\]
uniformly in $C$, with an implied constant independent of $m$.
\end{theorem}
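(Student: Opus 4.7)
The plan is to combine the projective determinant method of Heath-Brown~\cite{ann} with a classical $m$-descent on the Jacobian. If $C(\Q)=\emptyset$ then $N(B)=0$, so we may fix an origin and identify $C$ with $E:=\Jac(C)$. By Proposition~\ref{trenta} we may also assume $\|F\|\ll B^{30}$. The Mordell--Weil theorem together with Mazur's bound gives $E(\Q)\cong\Z^r\oplus T$ with $|T|\le 16$ and the torsion subgroup of $\Z$-rank at most $2$, so $|E(\Q)/mE(\Q)|\ll m^{r+2}$ with an absolute implied constant. Partitioning $C(\Q)$ into $\ll m^{r+2}$ cosets modulo $mE(\Q)$, it suffices to bound the contribution of each coset by $\log^2 B+B^{2/(3m^2)}\log B$.

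For a coset with representative $P_j$, consider the degree-$m^2$ map $\phi_j:E\to E$ given by $Q\mapsto [m]Q+P_j$, whose image is that coset. Choose a complete linear system $|L|$ of degree $m^2$ on $E$ and use it to embed $E$ as a curve of degree $m^2$ in $\Proj^{m^2-1}$. Adjusting $L$ within its algebraic equivalence class, one arranges the divisor identity $\phi_j^*\mathcal{O}_{\Proj^2}(1)\sim 3L$ (both classes having degree $3m^2$); combined with functoriality of heights this gives
\[
h_L(Q) \;=\; \tfrac{1}{3}\, h_{\mathcal{O}(1)}\bigl(\phi_j(Q)\bigr)+O(1).
\]
Consequently any $P\in C(\Q)$ with $H(P)\le B$ arises as $\phi_j(Q)$ for some $Q$ whose naive height in the $L$-embedding is $\ll B^{1/3}$, up to a factor polynomial in $\|F\|$ that is absorbable via Proposition~\ref{trenta}.

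Now apply the determinant method (\cite[Theorem~3]{ann}, or its form for curves of degree $d$ in arbitrary projective spaces) to the image curve in $\Proj^{m^2-1}$, which has degree $d=m^2$. This yields a count of the form $\ll B'^{2/m^2}\log B'$ for rational points of height $\le B'$, with constants depending only polynomially on $m$. Setting $B'\asymp B^{1/3}$ produces the $B^{2/(3m^2)}\log B$ contribution; the complementary $\log^2 B$ term handles the opposite regime, where the determinant estimate is weaker than a direct canonical-height count in the spirit of \eqref{sd}. Multiplying by the $\ll m^{r+2}$ cosets yields the claimed bound.

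The main obstacle is maintaining uniformity simultaneously in $C$ and in $m$. Two points are delicate. First, the $O(1)$ term in the height comparison must be genuinely absolute apart from tame powers of $\|F\|$ that Proposition~\ref{trenta} allows us to discard; this rests on Silverman-type explicit inequalities between the naive and the N\'eron--Tate heights. Second, the determinant-method constants, as well as the complexity of the defining equations of the image curve in $\Proj^{m^2-1}$, must grow only polynomially in $m$, since any stronger dependence would swamp the $m^{r+2}$ prefactor. Controlling both in concert is the chief technical burden of the argument.
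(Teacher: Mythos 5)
Your descent step (partition into $\ll m^{r}$ or $m^{r+2}$ cosets of $mE(\Q)$, then count one coset via $P=mQ+P_j$) matches the paper's first move, but the core of your argument has a genuine gap, and it is exactly the one the paper is built to avoid. You reduce the problem to counting the points $Q$ alone, in a degree-$m^2$ embedding, and you need the naive height of $Q$ there to be $\ll B^{1/3}$ ``up to a factor polynomial in $\|F\|$ that is absorbable via Proposition~\ref{trenta}''. Two things go wrong. First, such factors are \emph{not} absorbable for fixed $m$: Proposition~\ref{trenta} only gives $\|F\|\ll B^{30}$, so a factor $\|F\|^{c}=B^{O(1)}$ in the height bound for $Q$, raised to the exponent $2/m^2$ from the determinant method, changes $B^{2/(3m^2)}$ into $B^{C/m^2}$ with a larger constant $C$, which destroys Theorem~\ref{one} as stated (and with it Theorem~\ref{two}, which needs the precise exponent $1/6$ at $m=2$). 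Second, the Silverman-type naive--canonical comparison you invoke has an error depending not only on $\|F\|$ but on the defining data of the translated linear system $L$ (which must satisfy $3L\sim\phi_j^*\mathcal{O}(1)$ and hence depends on the coset representative $P_j$, whose height may be as large as $B$, and which may only exist over an extension because of the $3$-division); so the error is plausibly of size $\log B$, i.e.\ a power of $B$ in the height bound. This is precisely why the paper states only the crude bound $H(Q)\le B^{A}$ (Lemma~\ref{crude}) and then refuses to work with $Q$ alone: it counts the pairs $(P,Q)$ on the biprojective curve $X_R\subset\Proj^2\times\Proj^2$, giving $P$ weight $a\asymp\log B$ and $Q$ only weight $b=m^2$, so that the crude control of $H(Q)$ costs only a factor $B^{O(1/\log B)}=O(1)$.

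Two further steps are asserted rather than proved. Your determinant-method input -- a count $\ll B'^{2/m^2}\log B'$ for a degree-$m^2$ curve in $\Proj^{m^2-1}$ with constants polynomial in $m$ -- is not available off the shelf: Heath-Brown's Theorem~5 has unspecified dependence on the degree and the ambient dimension, and making it polynomial in $m$ would require reworking the method, including the dimension counts for spaces of forms restricted to the curve; this is exactly the role played in the paper by the biprojective construction and Lemma~\ref{DT}/Lemma~\ref{geo}, where vanishing theorems and Riemann--Roch give $E=3(m^2a+b)$ exactly. Finally, your $\log^2 B$ term is attributed to a vague ``complementary regime'' handled by a canonical-height count in the spirit of~(\ref{sd}); but a uniform count of that type needs David's lower bounds for successive minima, which the main theorem deliberately avoids, and in the paper the $\log^2B$ term is not a separate regime at all -- it arises from choosing $a\asymp \log B$, $b=m^2$ and from needing an auxiliary prime of good reduction of size $\gg\log B$. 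As it stands, the proposal identifies the right shape of the answer but leaves unproved precisely the uniform height comparison and the uniform-in-$m$ determinant estimate on which everything rests.
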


Taking $m=1+[\sqrt{\log B}]$ we have the following immediate
corollary.
\begin{corollary}
Under the conditions above we have
\[N(B)\ll (\log B)^{3+r/2}\]
uniformly in $C$.
\end{corollary}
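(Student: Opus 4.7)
The plan is to substitute the suggested value $m = 1 + \lfloor\sqrt{\log B}\rfloor$ directly into the bound of Theorem~\ref{one} and to verify that both terms in the parenthetical expression contribute only a power of $\log B$. The computation splits into two short pieces: show that the ``geometric'' factor $B^{2/(3m^2)}$ collapses to an absolute constant, and control the ``combinatorial'' prefactor $m^{r+2}$.

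By construction $m > \sqrt{\log B}$, hence $m^2 > \log B$, which immediately gives
\[
B^{2/(3m^2)} < B^{2/(3\log B)} = e^{2/3}.
\]
Therefore $B^{2/(3m^2)}\log B \ll \log B \le \log^2 B$, so the bracket in Theorem~\ref{one} is $\ll \log^2 B$. On the other hand, the hypothesis $B \ge 3$ ensures $\log B \ge 1$, so $m \le 1+\sqrt{\log B} \le 2\sqrt{\log B}$, giving $m^{r+2} \ll (\log B)^{(r+2)/2}$. Multiplying these two bounds yields
\[
N(B) \ll (\log B)^{(r+2)/2}\cdot \log^2 B = (\log B)^{3+r/2},
\]
which is the claim.

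There is no real obstacle here: the corollary is a purely formal consequence of Theorem~\ref{one} combined with elementary estimates for $m$. The only thing worth noting is that the choice $m \asymp \sqrt{\log B}$ is essentially forced, since it balances the two terms $\log^2 B$ and $B^{2/(3m^2)}\log B$ in the parenthetical expression: any substantially smaller value of $m$ would leave the second term too large, and any substantially larger value would inflate the prefactor $m^{r+2}$ unnecessarily.
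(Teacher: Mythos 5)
Your proof is correct and is essentially the paper's own argument: the paper obtains the corollary by exactly this substitution $m=1+[\sqrt{\log B}]$ into Theorem~\ref{one}, treating the verification as immediate. (One small caveat if the implied constant is to be absolute rather than $r$-dependent: your step $m^{r+2}\ll(\log B)^{(r+2)/2}$ hides a factor $2^{r+2}$, which is harmless because $2^{r+2}\le(\log B)^{(r+2)/2}$ once $\log B\ge 4$, while for $3\le B\le e^{4}$ one trivially has $N(B)=O(1)$.)
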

While this is slightly weaker than~(\ref{sd}) it is interesting to
observe that the results of David~\cite{david} are obtained by very
different methods, based on transcendence theory.

We can combine Theorem~\ref{one} with~(\ref{hsave}) if we have a
suitable bound for $r$.  This 
is discussed by Ellenberg and Venkatesh~\cite[pages~2177 \& 2178]{EV}, 
who show that
\[2^r\le \|F\|^{6+o(1)}.\]
Thus, taking $m=2$ in Theorem~\ref{one}, we have
\[N(B)\ll \|F\|^{6+o(1)}B^{1/6+o(1)}.\]
On comparing this with~(\ref{hsave}) we see that the worst case is
that in which $\|F\|=B^{9/110}$, and that in every case we can save
$B^{1/110}$.  We conclude as follows.
\begin{theorem}\label{two}
Let $\delta<1/110$.  Then for any smooth plane cubic curve $C$ we have
\[N(B)\ll B^{2/3-\delta}\]
uniformly in $C$.
\end{theorem}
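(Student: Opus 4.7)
The plan is to combine Theorem~\ref{one} with~(\ref{hsave}), using the Ellenberg--Venkatesh bound $2^r\le\|F\|^{6+o(1)}$ to eliminate the rank from the first estimate. The two resulting upper bounds depend on $\|F\|$ in opposite directions, so a balancing argument pins down the worst case and yields the stated saving.

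First I would apply Theorem~\ref{one} with $m=2$. Since $m^{r+2}=2^{r+2}$ and the dominant term for $B\ge 3$ is $B^{2/(3m^2)}\log B = B^{1/6}\log B$, this gives $N(B)\ll 2^{r+2}(\log^2 B+B^{1/6}\log B)$. Substituting the rank bound $2^r\le\|F\|^{6+o(1)}$ then yields
\[
N(B)\ll \|F\|^{6+o(1)}\,B^{1/6+o(1)},
\]
which is the ``group-theoretic'' bound; it is strong for small $\|F\|$ but degrades as $\|F\|$ grows. The second bound, namely~(\ref{hsave}), gives
\[
N(B)\ll_\ep B^{\ep}\bigl(B^{2/3}\|F\|^{-1/9}+1\bigr),
\]
which is the ``determinant method'' bound and goes the other way: it is strong for large $\|F\|$ and weak for small $\|F\|$.

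Now I would take, for each $C$, the minimum of the two estimates. The crossover occurs (up to the $o(1)$ in the exponent) when $\|F\|^{6}\,B^{1/6}=B^{2/3}\|F\|^{-1/9}$, i.e.\ when $\|F\|^{6+1/9}=B^{1/2}$, giving the threshold $\|F\|=B^{9/110}$. Substituting this value into either bound produces exactly $B^{2/3-1/110+o(1)}$, so for every value of $\|F\|$ the minimum of the two bounds is at most $B^{2/3-1/110+o(1)}$. Proposition~\ref{trenta} can be invoked to dispose of the case $\|F\|\gg B^{30}$ trivially (there $N(B)\le 9$), so only the regime where the balancing makes sense is relevant.

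The only real obstacle is bookkeeping with the $o(1)$ exponents: both the Ellenberg--Venkatesh rank bound and~(\ref{hsave}) contribute factors $B^{o(1)}$ and $\|F\|^{o(1)}$, and each such factor eats into the fixed saving of $1/110$. This is why the theorem must be stated for $\delta<1/110$ rather than with equality; for any fixed $\delta$ strictly less than $1/110$, the $o(1)$ losses can be absorbed and one concludes $N(B)\ll B^{2/3-\delta}$ uniformly in $C$.
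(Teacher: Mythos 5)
Your proposal is correct and follows essentially the same route as the paper: take $m=2$ in Theorem~\ref{one}, insert the Ellenberg--Venkatesh bound $2^r\le\|F\|^{6+o(1)}$, and balance against~(\ref{hsave}) at the worst case $\|F\|=B^{9/110}$, with the $o(1)$ losses absorbed by the strict inequality $\delta<1/110$. The appeal to Proposition~\ref{trenta} for very large $\|F\|$ is harmless but unnecessary, since~(\ref{hsave}) only improves as $\|F\|$ grows.
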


\section{The Descent Argument}

Let $\psi \colon C \times C \to \Jac(C)$ be the morphism defined by $\psi (P,Q) = [P]-[Q]$.  
Let $m$ be a positive integer and define an equivalence relation $\sim_m$
on $C(\Q)$ by saying that $P\sim_m Q$ if and only if
$\psi(P,Q) \in m \bigl( \Jac(C)(\mathbb{Q}) \bigr)$.  The number of equivalence classes 
is at most $16m^r$, allowing for possible torsion in $\Jac(C)(\mathbb{Q})$. If $K$
is an equivalence class, we write $N_K(B)$ for the number of
points $P\in K$ with $H (P)\le B$, 
so that there is a class $K^*$ for which
\begin{equation}\label{Kb}
N(B) \ll m^r N_{K^*}(B).
\end{equation}
We proceed to estimate $N_K(B)$
for a given class $K$.  If we fix a point $R$ counted by $N_K(B)$
then for any other point $P$ counted by $N_K(B)$ there will be a
further point $Q\in C(\Q)$ such that $[P]=m[Q]-(m-1)[R]$.

Having fixed $R$ we define the curve $X=X_R$ by 
\begin{equation} \label{ics}
X_R := \bigl\{ (P,Q)\in C \times C : \, [P]=m[Q]-(m-1)[R] \bigr\}
\end{equation}
in $\Proj^2\times \Proj^2$.  This allows us to write
\[
N_K(B) = \# \bigl\{ (P,Q) \in X(\Q) :\, H(P)\le B \bigr\} .
\]
The fundamental idea here is that we are counting rational points 
$(P,Q)$ on the curve $X \subset \Proj^2\times \Proj^2$.  It might 
seem natural to work with the point $Q\in C(\Q)$ alone.  However 
it is hard to control $H (Q)$ sufficiently accurately, and working 
with $P$ and $Q$ simultaneously avoids this difficulty.

None the less we do need a crude bound for $H (Q)$, which the following
lemma provides.
\begin{lemma}\label{crude}
For any $c>0$ there is a constant $A$ depending only on $c$, with 
the following property.
Let $C$ be a smooth plane cubic curve defined by a primitive form $F$ with 
$\|F\| \le cB^{30}$, and let $R$ be a point in $C(\Q)$.  Suppose 
that $(P,Q)$ is a point in $X_R(\Q)$ and that $B\ge 3$.  Then if 
$H(P),H(R)\le B$ we have
\[
H(Q)\le B^A.
\]
\end{lemma}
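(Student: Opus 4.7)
The plan is to translate the divisor-class relation defining $X_R$ into an identity in the group law of $C$, and then to exploit the quadratic scaling of the N\'eron--Tate canonical height. Taking $R$ as the origin of the group law makes $(C,R)$ into an elliptic curve isomorphic to $\Jac(C)$ via the Abel--Jacobi map $T \mapsto [T]-[R]$, under which the defining relation $[P]=m[Q]-(m-1)[R]$ becomes simply $P = mQ$. Writing $\hat h$ for the canonical height on $\Jac(C)$ pulled back to $C(\Q)$ through this map, the quadratic property gives
\[
\hat h(Q) = m^{-2}\,\hat h(P) \le \hat h(P),
\]
so the whole argument reduces to comparing $\hat h$ with the naive height $h(T) = \log H(T)$.

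The second step is to establish an explicit comparison of the form
\[
\bigl| \hat h(T) - h(T) \bigr| \ll \log \|F\| + \log H(R) + 1
\]
for $T \in C(\Q)$, with an absolute implied constant. Granted this, the hypotheses $H(P), H(R) \le B$ and $\|F\| \le c B^{30}$ make every quantity on the right $O(\log B)$. Hence $\hat h(P) \le h(P) + O(\log B) = O(\log B)$; the displayed inequality above then forces $\hat h(Q) = O(\log B)$; and applying the comparison a second time yields $h(Q) = O(\log B)$, that is $H(Q) \le B^A$ for some $A = A(c)$.

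The hard part will be establishing the height comparison above with the required polynomial dependence on $\|F\|$ and $H(R)$. This extends Silverman's classical bound $|\hat h - h_x| \ll \log|\Delta|+1$ for Weierstrass models to the setting of an arbitrary plane cubic with an arbitrary rational origin. One route is to pass to a Weierstrass equation for $(C,R)$ while controlling the heights of the coefficients of the change of coordinates in terms of $\|F\|$ and $H(R)$; another is to decompose $\hat h$ into a sum of local N\'eron heights computed directly from $F$ and argue place by place. Either approach is technical but essentially routine, and it is at this stage that the crude bound $\|F\| \le c B^{30}$ supplied by Proposition~\ref{trenta} is actually used.
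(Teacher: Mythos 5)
Your argument is essentially the paper's own proof: both rest on the exact quadratic scaling $\hat{h}(\psi(P,R))=m^{2}\hat{h}(\psi(Q,R))$ (your $P=mQ$ in the group law with origin $R$) together with comparisons, with error $O(1+\log\|F\|+\log H(R))$, between the naive plane height and the canonical height on a Weierstrass model of $\Jac(C)$ whose coefficients have height $O(1+\log\|F\|)$, and the paper likewise quotes these comparison estimates as standard rather than proving them. One small correction: since the plane height is attached to a degree-$3$ class and the canonical height to a degree-$2$ class, your two-sided additive bound $|\hat{h}(T)-h(T)|\ll \log\|F\|+\log H(R)+1$ is not literally correct as stated; it should be replaced by the pair of one-sided inequalities $\hat{h}(T)\ll h(T)+O(1+\log\|F\|+\log H(R))$ and $h(T)\ll \hat{h}(T)+O(1+\log\|F\|+\log H(R))$, which is exactly what the paper uses and which suffices because the exponent $A$ is allowed to be any constant depending only on $c$.
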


\begin{proof}
We use the well known fact that we can choose a model for
$\Jac(C)$ in Weierstrass normal form 
$y^2=x^3+\alpha x+\beta $ such that
\[
h ([1,\alpha , \beta ])\ll 1+\log \|F\|
\]
and so that
\[
h_x(\psi(P,R))\ll 1+\log H(P) +\log H(R)+\log\|F\|
\]
and
\[
\log H(P)\ll 1+h_x(\psi(P,R))+\log H(R)+\log\|F\|,
\]
where $h_x$ is the logarithmic height of the $x$-coordinate.  
We shall also use the fact that on $\Jac(C)$ the canonical height 
$\hat{h}$ satisfies 
\[
|\hat{h}(R)-h_x(R)|\ll 1+h([1 , \alpha , \beta ])\ll 1+\log \|F\|.
\]

Since $\psi(P,R)=[P]-[R]=m([Q]-[R])=m\psi(Q,R)$ we deduce that
$\hat{h}(\psi(P,R))=m^2\hat{h}(\psi(Q,R))$, whence
\begin{eqnarray*}
\log H(Q)&\ll& 1+h_x(\psi(Q,R))+\log H(R)+\log\|F\|\\
&\ll& 1+\hat{h}(\psi(Q,R))+\log H(R)+\log\|F\|\\
&=& 1+m^{-2}\hat{h}(\psi(P,R))+\log H(R)+\log\|F\|\\
&\ll& 1+m^{-2}h_x(\psi(P,R))+\log H(R)+\log\|F\|\\
&\ll& 1+\log H(P)+\log H(R)+\log\|F\|\\
&\ll& \log B+\log\|F\|\\
&\ll &\log B
\end{eqnarray*}
since $\|F\|\ll B^{30}$.  The lemma then follows.
\end{proof}

\section{Outline of the Determinant Method}

In this section we shall set up the ``determinant method'', following
the ideas laid down by Heath-Brown~\cite[\S 3]{ann}, but modified to
handle a bi-homogeneous curve.  We shall only do as much
as is needed for our application, but it will be clear, we hope, how
one might proceed in more generality if necessary.  In view of
Proposition~\ref{trenta} we shall assume that $\|F\| \ll B^{30}$.

We take $p$ to be a prime of good reduction for $C$.  For each point 
$Q'$ on $C(\F_p)$ we define the set
\[
S(Q';p,B) = 
\Bigl\{ (P,Q)\in X_R(\Q) :\, H(P)\le B,\,\, \overline{Q}=Q' \Bigr\} ,
\]
where $\overline{Q}$ denotes the reduction from $C(\Q)$ to $C(\F_p)$.
We proceed to estimate $\# S(Q';p,B)$, for a
particular choice $Q'$, bearing in mind
that there are $O(p)$ possible points $Q'$.  
In view of~(\ref{Kb}) there exist $K^*$ and $Q^*$ such that
\begin{equation}\label{KQstb}
N(B) \ll m^r N_{K^*}(B) \ll m^r p \, \#S(\overline{Q^{*}};p,B).
\end{equation}
It will be convenient to label
the points in $S(\overline{Q^{*}};p,B)$ as $(P_j,Q_j)$ for $1\le
j\le N$, say, and to write $P_j$ and $Q_j$ in terms of primitive integer
triples as $P_j=[p_{0j},p_{1j},p_{2j}]$ and
$Q_j=[q_{0j},q_{1j},q_{2j}]$.

We now fix degrees $a,b\ge 1$ and consider a set of bi-homogeneous
monomials of bi-degree $(a,b)$ of the form
\begin{equation}\label{mon}
x_{0}^{e_0}x_{1}^{e_1}x_{2}^{e_2}y_{0}^{f_0}y_{1}^{f_1}y_{2}^{f_2},
\end{equation}
with 
\[
e_0+e_1+e_2=a\;\;\;\mbox{and}\;\;\; f_0+f_1+f_2=b.
\]
The exponent vectors $(e_0,e_1,e_2;f_0,f_1,f_2)$ will run over a
certain set $\cl{E}$, which we now describe.  Let $I_1$ be the
$\Q$-vector space of all bi-homogeneous forms
in $\Q[x_0,x_1,x_2,y_0,y_1,y_2]$ with
bi-degree $(a,b)$, and let $I_2$ be the subspace of such forms which
vanish on $X$.  Since the monomials form a basis for $I_1$ there is a
subset of monomials in $I_1$ whose corresponding cosets form a basis 
for $I_1/I_2$.  We choose $\cl{E}$ to correspond to the set of 
monomials forming such
a set of representatives.  The following result tells us the
cardinality of $\cl{E}$.

\begin{lemma}\label{DT}
If $a,b$ and $m$ are positive integers satisfying 
$\frac{1}{a}+\frac{m^2}{b}<3$, then $\#\cl{E}=3(m^2a+b)$.
\end{lemma}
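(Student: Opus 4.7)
The plan is to identify $X_R$ with $C$ itself via the second projection, apply Riemann--Roch on the elliptic curve $C$ to compute the dimension of global sections of the restricted line bundle, and then verify that the restriction map $I_1 \to H^0(X_R, \cl{O}(a,b)|_{X_R})$ is surjective, so that $\#\cl{E} = \dim(I_1/I_2)$ matches this $h^0$.

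Since the equation $[P] = m[Q] - (m-1)[R]$ uniquely determines $P$ given $Q$, the second projection $\pi_2\colon X_R \to C$, $(P,Q) \mapsto Q$, is an isomorphism, and $X_R$ is the graph of the morphism $\phi\colon C \to C$, $Q \mapsto P$. Choosing $R$ as the origin identifies $\phi$ with multiplication by $m$ on the elliptic curve $C$, so $\deg \phi = m^2$. Under $\pi_2$, the line bundle $\cl{O}_{\Proj^2 \times \Proj^2}(a,b)|_{X_R}$ corresponds to $\phi^* \cl{O}_C(a) \otimes \cl{O}_C(b)$ on $C$, of degree $3(m^2 a + b)$, since $\cl{O}_C(1)$ has degree $3$ on the plane cubic. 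Because $a,b \ge 1$ forces this degree positive, Riemann--Roch on the genus-$1$ curve $C$ immediately gives $h^0\bigl( X_R, \cl{O}(a,b)|_{X_R} \bigr) = 3(m^2 a + b)$.

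To conclude, I would verify surjectivity of the restriction by factoring it through $H^0(C \times C, \cl{O}(a,b))$. The first surjection is standard, following from K\"unneth together with $H^1(\Proj^2, \cl{O}(n-3)) = 0$ for $n \ge 1$; the second reduces, via the restriction sequence on the abelian surface $C \times C$, to the vanishing $H^1\bigl( C \times C, \cl{O}(a,b) \otimes \cl{O}(-X_R) \bigr) = 0$, which is the main obstacle. Using $X_R^2 = 0$ (by adjunction, since $X_R \cong C$ has genus $1$ and $C \times C$ has trivial canonical class) and the intersections $X_R \cdot F_1 = m^2$, $X_R \cdot F_2 = 1$ with the fiber classes $F_1 = \{p\} \times C$, $F_2 = C \times \{q\}$ of the two projections, a direct computation yields
\[
\bigl( \cl{O}(a,b) - X_R \bigr)^2 = 18ab - 6(m^2 a + b) = 6(3ab - m^2 a - b),
\]
which is positive precisely under the hypothesis $\tfrac{1}{a} + \tfrac{m^2}{b} < 3$. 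Combined with the positive pairing against the ample class $F_1 + F_2$, namely $(3b - m^2) + (3a - 1) > 0$ (again from the hypothesis and $a \ge 1$), this places $\cl{O}(a,b) - X_R$ in the ample cone of the abelian surface $C \times C$, at which point Mumford's vanishing theorem for non-degenerate line bundles on abelian varieties supplies the required $H^1 = 0$.
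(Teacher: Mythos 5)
Your argument is correct, and its skeleton is the same as the paper's: factor the restriction map through $C\times C$, prove surjectivity onto $\HH^0\bigl(X_R,\mathcal{O}_{X_R}(a,b)\bigr)$ by killing an $\HH^1$ on the abelian surface, and compute $h^0$ on $X_R\cong C$ (via the graph description and $\deg[m]=m^2$) by Riemann--Roch on a genus one curve; your key numerical input $\bigl((a,b)-X_R\bigr)^2=6(3ab-m^2a-b)>0$ is exactly the paper's. The differences are in the supporting lemmas. For the descent from $\Proj^2\times\Proj^2$ to $C\times C$ you use the K\"unneth decomposition together with $\HH^1(\Proj^2,\mathcal{O}(n-3))=0$, where the paper performs two successive divisor restrictions and invokes Kodaira vanishing on the fourfold and threefold; your version is more elementary. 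For $X_R^2=0$ you use adjunction ($K_{C\times C}=0$, $X_R$ smooth of genus one), where the paper uses that the translates $X_T$, $X_{T'}$ are algebraically equivalent and generically disjoint; both are fine. For the crucial vanishing $\HH^1\bigl(C\times C,\mathcal{O}(a,b)(-X_R)\bigr)=0$ you upgrade the class to an ample one — using that on an abelian surface a class with positive square and positive pairing against an ample class lies in the ample cone (no curves of negative self-intersection) — and then quote Mumford's index/vanishing theorem, whereas the paper stops at ``effective, hence nef, and big'' and applies Kawamata--Viehweg. Your route buys a statement that only needs classical vanishing on abelian varieties rather than Kawamata--Viehweg, at the cost of the (standard, but worth stating) ample-cone fact; the paper's route needs only nefness of effective divisors on abelian surfaces. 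Both verifications of positivity against a nef or ample class ($3(3a-1)>0$ in the paper, $(3a-1)+(3b-m^2)>0$ in yours) follow from the hypothesis, so the proposals are interchangeable.
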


We shall prove this later, in~\S \ref{DTproof}.  We shall assume
henceforth that $a \geq 1$ and $b\ge m^2$ for convenience, which 
will suffice to ensure that $\#\cl{E}=3(m^2a+b)$.

We proceed to construct a matrix $M$ whose entries are the monomials
\[
p_{0j}^{e_0} \; p_{1j}^{e_1} \; p_{2j}^{e_2} \; q_{0j}^{f_0} \; q_{1j}^{f_1} \; q_{2j}^{f_2}
\]
with exponents in $\cl{E}$.  The row of the matrix $M$ indexed by $j$
corresponds to the point $(P_j,Q_j)\in S(\overline{Q^{*}};p,B)$; the columns 
of $M$ correspond to exponent vectors in $\cl{E}$.
Thus $M$ is an integer matrix of size
$N\times E$, where $E:=\# \cl{E}$.

We will show that if the prime $p$ and the degrees $a$ and $b$ are
appropriately chosen, then the rank of $M$ is strictly less than $E$.
It will follow that there is a non-zero column vector $\b{c}$ such
that $M\b{c}=\b{0}$.  The entries of $\b{c}$ are indexed by the monomials
in $\cl{E}$, and we therefore produce a bi-homogeneous form $G$, say,
with bi-degree $(a,b)$, such that
$G(p_{0j},p_{1j},p_{2j};q_{0j},q_{1j},q_{2j})=0$ for each $j\le N$.
Thus the points $(P_j,Q_j)$ all lie on the variety
$Y\subset\Proj^2\times\Proj^2$ given by $G=0$.
Our choice of exponents in $\cl{E}$ ensures that the irreducible curve $X$
does not lie wholly inside $Y$.  Thus $X\cap Y$ has components of
dimension 0 only, and we deduce that
\[
N \le \#(X\cap Y) \le X \cdot (a,b) = 3(m^2a+b) ,
\]
where the intersection number computation is explained in the proof of Lemma~\ref{geo}.
This gives us a bound for $\# S(\overline{Q^*};p,B)$, and hence
by~(\ref{KQstb}) also for $N(B)$.  

We summarize our findings as follows.
\begin{lemma}\label{Nest}
Let $p$ be a prime of good reduction for $C$.  Suppose we have
integers $a\ge 1$ and $b\ge m^2$ such that the matrix $M$ above
necessarily has rank strictly less than $E$.  Then
\[
N(B) \ll m^rp(m^2a+b) 
\]
with an absolute implied constant.
\end{lemma}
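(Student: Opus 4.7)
The plan is to combine the descent reduction with the determinant-method machinery already set up in this section. First I apply~(\ref{KQstb}), which bounds $N(B)$ by $O(m^{r}p\cdot\#S(\overline{Q^{*}};p,B))$ for a suitably chosen class $K^{*}$ and reduction point $\overline{Q^{*}}\in C(\F_p)$: the factor $m^{r}$ absorbs the at most $16m^{r}$ classes of $\sim_m$, while the factor $p$ counts the possible residues $\overline{Q^{*}}$.

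Let $N=\#S(\overline{Q^{*}};p,B)$ and label its elements $(P_j,Q_j)_{j=1}^{N}$ with primitive integer coordinates. The associated matrix $M$ has size $N\times E$, where $E=\#\cl{E}=3(m^{2}a+b)$ by Lemma~\ref{DT} (applicable since $a\ge 1$ and $b\ge m^{2}$). Under the hypothesis $\mathrm{rank}\,M<E$, there exists a non-zero vector $\b{c}\in\Q^{E}$ with $M\b{c}=\b{0}$. Viewing the entries of $\b{c}$ as the coefficients of the monomials in $\cl{E}$, we obtain a non-zero bi-homogeneous form $G\in\Q[x_0,x_1,x_2,y_0,y_1,y_2]$ of bi-degree $(a,b)$ with $G(P_j,Q_j)=0$ for every $j$. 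Since $\cl{E}$ represents a basis of $I_1/I_2$, no non-trivial linear combination of its monomials lies in $I_2$, so $G$ does not vanish identically on $X_R$.

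I then bound $N$ by $\#(X_R\cap Y)$, where $Y\subset\Proj^{2}\times\Proj^{2}$ is the divisor of $G$. Since $X_R$ is irreducible and not contained in $Y$, the intersection is zero-dimensional, with cardinality at most the intersection number $X_R\cdot(a,b)$, which Lemma~\ref{geo} will identify as $3(m^{2}a+b)$. Combining these estimates with~(\ref{KQstb}) gives $N(B)\ll m^{r}p(m^{2}a+b)$, as required.

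The main obstacle, and where the remaining geometric input lies, is the intersection identity $X_R\cdot(a,b)=3(m^{2}a+b)$. This reduces to computing the class of $X_R$ in the Chow ring of $\Proj^{2}\times\Proj^{2}$ via the degrees of its two projections: $(P,Q)\mapsto Q$ is birational onto $C$ (contributing $3$ against a hyperplane in the second factor), while $(P,Q)\mapsto P$ has degree $m^{2}$, since for generic $P$ the relation $\psi(P,R)=m\psi(Q,R)$ has $m^{2}$ solutions for $\psi(Q,R)\in\Jac(C)$, each determining a unique $Q\in C$ via the Abel--Jacobi isomorphism based at $R$. This contributes $3m^{2}$. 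One also needs to verify irreducibility of $X_R$ to legitimately invoke the Bezout-type bound. Granting Lemmas~\ref{DT} and~\ref{geo}, the rest is bookkeeping.
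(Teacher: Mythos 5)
Your proposal is correct and follows essentially the same route as the paper: reduce via~(\ref{KQstb}) to counting $S(\overline{Q^*};p,B)$, use the null vector of $M$ to produce a bi-homogeneous form $G$ of bi-degree $(a,b)$ not vanishing identically on $X_R$ (because $\cl{E}$ represents a basis of $I_1/I_2$), and bound the count by the intersection number $X_R\cdot(a,b)=3(m^2a+b)$, whose computation the paper likewise defers to Lemma~\ref{geo}.
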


\section{Vanishing Determinants}

In order to show that the matrix $M$ considered above has rank
strictly less than $E$ we may clearly suppose that $N\ge E$.  Under
this assumption we will show that each $E\times E$ minor of $M$ 
vanishes.  Let $\Delta $ be the $E \times E$ matrix formed 
from $E$ rows of $M$.  Our strategy, as in Heath-Brown~\cite[\S 3]{ann}, 
is to estimate the (archimedean) size of $\det \Delta$, and to
compare it with its $p$-adic valuation.

The archimedean estimate is easy.  According to Lemma~\ref{crude},
there is an absolute constant $A$ such that every entry in the
matrix $\Delta$ has modulus at most $B^a \cdot B^{Ab}$.  Since
$\Delta$ is an $E\times E$ matrix, we conclude as follows.
\begin{lemma}\label{arch}
There is an absolute constant $A$ such that
\[|\det \Delta|\le E^EB^{E(a+Ab)}.\]
\end{lemma}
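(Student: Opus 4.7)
The plan is to bound each entry of $\Delta$ individually and then apply the Leibniz expansion of the determinant; there is essentially no obstacle here beyond invoking Lemma~\ref{crude}.

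First I would recall that by construction each row of $\Delta$ corresponds to some $(P_j,Q_j)\in S(\overline{Q^*};p,B)$ with $H(P_j)\le B$. Writing $P_j=[p_{0j},p_{1j},p_{2j}]$ and $Q_j=[q_{0j},q_{1j},q_{2j}]$ with coprime integer coordinates, we immediately have $|p_{ij}|\le B$, so
\[
\bigl| p_{0j}^{e_0} p_{1j}^{e_1} p_{2j}^{e_2} \bigr| \le B^{e_0+e_1+e_2} = B^a.
\]
For the $Q_j$ coordinates we invoke Lemma~\ref{crude}: the hypothesis $\|F\|\ll B^{30}$ is in force throughout this section by Proposition~\ref{trenta}, and we also have $H(P_j),H(R)\le B$, so there is an absolute constant $A$ with $H(Q_j)\le B^A$. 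Consequently $|q_{ij}|\le B^A$ and
\[
\bigl| q_{0j}^{f_0} q_{1j}^{f_1} q_{2j}^{f_2} \bigr| \le B^{Ab}.
\]
Multiplying the two bounds, every entry of $\Delta$ has modulus at most $B^{a+Ab}$.

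Finally I would expand the determinant using the Leibniz formula
\[
\det \Delta = \sum_{\sigma\in S_E} \mathrm{sgn}(\sigma) \prod_{i=1}^{E} \Delta_{i,\sigma(i)},
\]
whose $E!$ terms are each bounded in absolute value by $(B^{a+Ab})^E$. Since $E!\le E^E$, the triangle inequality yields
\[
|\det \Delta| \le E!\,B^{E(a+Ab)} \le E^E B^{E(a+Ab)},
\]
which is the claimed estimate. (If a slightly sharper bound were wanted, Hadamard's inequality would give $E^{E/2}$ in place of $E^E$, but the stated form suffices for later use.)
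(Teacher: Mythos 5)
Your proof is correct and follows essentially the same route as the paper: bound each entry of $\Delta$ by $B^{a+Ab}$ using $H(P_j)\le B$ together with Lemma~\ref{crude} (valid since $\|F\|\ll B^{30}$ and $H(R)\le B$), then bound the determinant of the $E\times E$ matrix by the product of $E!\le E^E$ terms. Nothing is missing; your remark that Hadamard would sharpen $E^E$ to $E^{E/2}$ is accurate but, as you say, unnecessary for the application.
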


The $p$-adic estimate forms the core of the determinant method.
We remark at once that if we choose different projective
representatives
\[(p_{0j}',p_{1j}',p_{2j}')=\lambda_j(p_{0j},p_{1j},p_{2j})\] 
for $P_j$, this will not affect the value of $v_p(\det \Delta)$, providing
that $\lambda_j$ is a $p$-adic unit; and similarly for $Q_j$.

In general, the point $P$ is determined by $Q$ via the relation 
\[
[P]=m[Q]-(m-1)[R] 
\]
appearing in~(\ref{ics}). Since $p$ is a prime of good reduction 
the map which takes $Q$ to $P$ is well-defined 
over both $\Q$ and
$\F_p$. 
If $Q=[(q_0,q_1,q_2)]$ the map
is given by $P=[(p_0,p_1,p_2)]$ with forms 
\[p_0(y_0,y_1,y_2), 
p_1(y_0,y_1,y_2),p_2(y_0,y_1,y_2)\in\Z[y_0,y_1,y_2]\]
of equal degree.  Different
points $Q$ may require different forms $p_0,p_1,p_2$.  
Let $Q^*=[(q_0^*,q_1^*,q_2^*)]$ be as in~(\ref{KQstb}), with $q_0^*,q_1^*,q_2^*$ 
integers not all divisible by $p$.
Since the map is well defined over $\F_p$ at $\overline{Q^*}$ there is a
choice of forms such that 
\[\bigl( p_0(q_0^*,q_1^*,q_2^*),p_1(q_0^*,q_1^*,q_2^*),
p_2(q_0^*,q_1^*,q_2^*) \bigr) \not \equiv (0,0,0) \pmod{p}.\]
With this particular choice we find that if $(P_j,Q_j)\in
S(\overline{Q^{*}};p,B)$ then $P_j=[(p_{0j},p_{1j},p_{2j})]$ with 
$p_{ij}=p_i(q_{0j},q_{1j},q_{2j})$ and 
\[
(p_{0j},p_{1j},p_{2j})\not\equiv (0,0,0)\pmod{p} .
\]
By the remark above, this choice of projective
representative for $P_j$ does not affect $v_p(\det \Delta)$.

Because $q_0^*,q_1^*,q_2^*$ are not all divisible by $p$, 
we suppose, without loss of
generality, that $p\nmid q_0^*$.   Since $\overline{Q_j}=\overline{Q^{*}}$
for all the pairs $(P_j,Q_j)$ under consideration we may think of
$Q_j\in\Proj^2(\Q_p)$ as $[(1,z_{1j},z_{2j})]$ with $z_{1j}=
q_{1j}q_{0j}^{-1}$ and $z_{2j}=q_{2j}q_{0j}^{-1}$ both $p$-adic
integers.  By the remark made earlier, replacing $(q_{0j},q_{1j},q_{2j})$ by
$(1,z_{1j},z_{2j})$ and replacing 
\[
(p_{0j},p_{1j},p_{2j}) = \bigl( p_0(q_{0j},q_{1j},q_{2j}), \,
p_1(q_{0j},q_{1j},q_{2j}), \, p_2(q_{0j},q_{1j},q_{2j})\bigr)
\]
by
\[
\bigl( p_0(1,z_{1j},z_{2j}), \, p_1(1,z_{1j},z_{2j}), \, p_2(1,z_{1j},z_{2j}) \bigr)
\]
does not affect $v_p(\det \Delta)$.

With these changes, we have replaced the original matrix $\Delta$ 
by a matrix $\Delta_0$ whose $i$-th column contains values
$g_i(z_{1j},z_{2j})$ for $1\le j\le E$, where $g_i(x,y)$ is a
polynomial in $\Z_p[x,y]$.  We proceed to write $z_1$ as a function of
$z_2$, which will enable us to replace the polynomials $g_i$ by
functions of $z_{2j}$ alone.

To do this we begin by showing that 
\begin{equation}\label{partials}
\frac{\partial F}{\partial y_1} (q_0^*,q_1^*,q_2^*) 
\;\;\; \mbox{and} \;\;\; 
\frac{\partial F}{\partial y_2} (q_0^*,q_1^*,q_2^*)
\end{equation}
cannot both be divisible by $p$.
If they were, we would
have
\begin{eqnarray*}
q_0^*\frac{\partial F}{\partial y_0} (q_0^*,q_1^*,q_2^*) &\equiv&
(q_0^*,q_1^*,q_2^*) \cdot \nabla F(q_0^*,q_1^*,q_2^*)\\
&=&3F(q_0^*,q_1^*,q_2^*)\\
&\equiv& 0\pmod{p}.
\end{eqnarray*}
Since $p\nmid q_0^*$ this would yield $p\mid \partial
F(q_0^*,q_1^*,q_2^*)/\partial y_0$, but then we would have $p\mid \nabla
F(q_0^*,q_1^*,q_2^*)$, which is impossible.  Thus at least one of
the derivatives~(\ref{partials}) must be coprime to $p$.  With no loss of
generality we assume that 
\[p\nmid \frac{\partial F}{\partial y_1} (q_0^*,q_1^*,q_2^*) .\]

We are now ready to
apply Lemma~5 of Heath-Brown~\cite{ann}, which is a form of the
$p$-adic Implicit Function Theorem.  For any positive integer $n$ this 
produces a polynomial $f_n(t)\in\Z_p[t]$ such that if
$\overline{Q_j}=\overline{Q^{*}}$ then 
\[z_{1j}\equiv f_n(z_{2j})\pmod{p^n}.\]
Substituting $f_n(z_{2j})$ for $z_{1j}$ in $\Delta_0$ we obtain a
matrix $\Delta _n$ with 
\[
\Delta_0\equiv\Delta_n\pmod{p^n}
\]
in which 
\[
(\Delta_n)_{ij}=h_i(z_{2j})
\]
for appropriate polynomials $h_i(t)\in\Z_p[t]$.  Lemma~6 of Heath-Brown~\cite{ann} 
now shows that 
\[
p^{E(E-1)/2}\mid \det \Delta_n ,
\]
since~\cite[(3.6)]{ann} yields $f=E-1$.  Choosing $n=E(E-1)/2$, 
we therefore conclude as follows.
\begin{lemma}
If $p$ is a prime of good reduction for $C$, then $p^{E(E-1)/2}$
divides $\det \Delta$.
\end{lemma}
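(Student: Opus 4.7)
The plan is to implement the argument already prefigured in the paragraphs preceding the lemma, organizing it into three clean steps. First I would exploit the remark that $v_p(\det\Delta)$ is unchanged when the chosen integer representatives of $P_j$ or $Q_j$ are rescaled by $p$-adic units. Combined with the earlier choice of polynomial forms $p_0,p_1,p_2$ for which $(p_0,p_1,p_2)(q_0^*,q_1^*,q_2^*)\not\equiv(0,0,0)\pmod p$, this lets me normalize so that $Q_j=[1,z_{1j},z_{2j}]$ with $z_{1j},z_{2j}\in\Z_p$, and correspondingly $P_j=[p_0(1,z_{1j},z_{2j}),p_1(1,z_{1j},z_{2j}),p_2(1,z_{1j},z_{2j})]$, without altering the $p$-adic valuation of $\det\Delta$. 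After this normalization each monomial entry of $\Delta$, having bi-degree $(a,b)$, becomes a polynomial $g_i(z_{1j},z_{2j})\in\Z_p[z_1,z_2]$; call the resulting matrix $\Delta_0$, so $v_p(\det\Delta)=v_p(\det\Delta_0)$.

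Next I would reduce to a single $p$-adic variable. Using the earlier observation that $\partial F/\partial y_1$ is not divisible by $p$ at $(q_0^*,q_1^*,q_2^*)$, Heath-Brown's $p$-adic Implicit Function Theorem (Lemma~5 of~\cite{ann}), applied to the affine equation $F(1,z_1,z_2)=0$ near the reduction $\overline{Q^*}$, produces for each $n\ge 1$ a polynomial $f_n(t)\in\Z_p[t]$ such that every solution in our family satisfies $z_{1j}\equiv f_n(z_{2j})\pmod{p^n}$. Substituting $f_n(z_{2j})$ for $z_{1j}$ in each $g_i$ gives a matrix $\Delta_n$ with $(\Delta_n)_{ij}=h_i(z_{2j})$ for suitable $h_i\in\Z_p[t]$, and by construction $\Delta_0\equiv\Delta_n\pmod{p^n}$, so $\det\Delta_0\equiv\det\Delta_n\pmod{p^n}$.

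Finally I would apply Lemma~6 of~\cite{ann} to $\Delta_n$. That lemma gives a $p$-adic lower bound of the shape $p^{f(f+1)/2}$ on the determinant of a single-variable evaluation matrix, where $f$ is one less than the number of columns; in our situation there are $E$ columns, so $f=E-1$ (as noted via~\cite[(3.6)]{ann}), yielding $p^{E(E-1)/2}\mid\det\Delta_n$. Taking $n=E(E-1)/2$ and combining with $\det\Delta_0\equiv\det\Delta_n\pmod{p^n}$ and $v_p(\det\Delta)=v_p(\det\Delta_0)$, I conclude $p^{E(E-1)/2}\mid\det\Delta$. The only real obstacle is bookkeeping: verifying that all the invariances of $v_p(\det\Delta)$ line up so that the reductions $\Delta\mapsto\Delta_0\mapsto\Delta_n$ genuinely preserve the divisibility, and confirming that the Heath-Brown lemmas apply with exactly the data we have here, namely a single variable $z_2$ and $E$ polynomial evaluations.
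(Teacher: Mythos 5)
Your proposal is correct and follows exactly the paper's own argument: normalize the projective representatives using the $p$-adic unit invariance of $v_p(\det\Delta)$, reduce to one variable via Heath-Brown's $p$-adic Implicit Function Theorem (Lemma~5 of~\cite{ann}) using $p\nmid \partial F/\partial y_1$ at $Q^*$, and apply Lemma~6 of~\cite{ann} with $f=E-1$ and $n=E(E-1)/2$. No gaps; this is the same proof.
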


Comparing this result with Lemma~\ref{arch}, we see that $\Delta$ must
vanish, providing that
\[
p>E^{2/(E-1)}B^{2(a+Ab)/(E-1)}.
\]
We note that $E^{2/(E-1)}<4$ for $E>2$.  Moreover, since we are
assuming that $\|F\| \ll B^{30}$, the discriminant $D_F$ of $F$ is at most a
power of $B$.  The number of primes of bad reduction is then at most 
\[
\omega(6 |D_F|) \ll \frac{\log|D_F|}{\log\log|D_F|}\ll
\frac{\log B}{\log\log B}.
\]
where $\omega (n)$ denotes the number of prime divisors of $n$.  
However if $P$ is sufficiently large there are at least $P/(2\log P)$
primes between $P$ and $2P$.  Thus there is an absolute constant, 
$c_0$ say, such
that any range $P<p\le 2P$ with $P\ge c_0\log B$ contains a prime $p$
of good reduction. We take 
\[
P=c_0\log B+4B^{2(a+Ab)/(E-1)}
\]
and deduce, for a suitable choice of $p$, that $\Delta=0$.  We then
conclude from Lemma~\ref{Nest} that
\[
N(B) \ll m^r(m^2a+b)\{\log B+B^{2(a+Ab)/(E-1)}\}.
\]

It remains to choose $a$ and $b$.  We recall that $E=3(m^2a+b)$, 
where $a\ge 1$ and $b\ge m^2$.  We shall in fact take $b=m^2$ and
$a=1+[\log B]$, whence
\[
\frac{2(a+Ab)}{E-1} \le \frac{2(a+m^2A)}{3m^2a} = 
\frac{2}{3m^2} + O \bigl( (\log B)^{-1} \bigr).
\]
We therefore deduce that
\[
N(B)\ll m^{r+2} \Bigl( \log^2 B+B^{2/(3m^2)}\log B \Bigr) ,
\]
as required for Theorem~\ref{one}.

\section{Proof of Lemma~\ref{DT}}\label{DTproof}

Recall that for any point $T \in C$ we define 
\[
X_T := \Bigl\{ (P,Q)\in C \times C :\, [P]=m[Q]-(m-1)[T] \Bigr\} 
\]
in $\Proj^2 \times \Proj^2$ (see~(\ref{ics})).

\begin{lemma} \label{geo}
Let $T$ be a point of $C$ and suppose that $a$, $b$ and $m$ are
positive integers satisfying the inequality 
$\frac{1}{a}+\frac{m^2}{b}<3$.  Then the restriction of global sections 
\[
\HH^0 \bigl( \Proj^2 \times \Proj^2 , \mathcal{O}_{\Proj^2 \times
  \Proj^2} (a,b) \bigr) \to \HH^0 \bigl( X_T , \mathcal{O}_{X_T} (a,b)
\bigr)\]
is surjective and the dimension of $\HH^0 \bigl( X_T , \mathcal{O}_{X_T} (a,b) \bigr)$ is $3(m^2a+b)$.  
\end{lemma}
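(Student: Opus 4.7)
My plan is to realize $X_T$ as the graph of a morphism $\phi\colon C\to C$, thereby reducing all questions about $X_T$ to questions on the elliptic curve $C$; the dimension formula then follows from Riemann--Roch, and the surjectivity reduces to a multiplication map of sections on $C$ that I can analyze by vector bundle methods.

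The map $\phi$ sends $Q$ to the unique point of $C$ with $[\phi(Q)]=m[Q]-(m-1)[T]$; it is the composition of multiplication by $m$ on $\Jac(C)$ with a translation, hence has degree $m^2$. As $X_T$ is its graph, the projection $\pi_2\colon X_T\to C$ is an isomorphism, and $\mathcal{O}_{\Proj^2\times\Proj^2}(a,b)|_{X_T}\cong\phi^*\mathcal{O}_C(a)\otimes\mathcal{O}_C(b)$ has degree $3m^2a+3b=3(m^2a+b)>0$, so Riemann--Roch on the elliptic curve $C$ gives $h^0=3(m^2a+b)$. For the surjectivity I first reduce from $\Proj^2\times\Proj^2$ to $C\times C$: using the Koszul resolution of $C\times C$ as the complete intersection cut out by $F(x)=0$ and $F(y)=0$, together with K\"unneth and $\HH^i(\Proj^2,\mathcal{O}(k))=0$ for $0<i<2$, one finds $\HH^0(\Proj^2\times\Proj^2,\mathcal{O}(a,b))\twoheadrightarrow \HH^0(C\times C,\mathcal{O}(a,b))=\HH^0(A)\otimes \HH^0(B)$, where $A:=\mathcal{O}_C(a)$ and $B:=\mathcal{O}_C(b)$. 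Restriction to $X_T$ then becomes the map
\[
\nu\colon \HH^0(A)\otimes \HH^0(B)\to \HH^0(\phi^*A\otimes B),\quad s\otimes t\mapsto\phi^*s\cdot t,
\]
so it remains to show that $\nu$ is surjective.

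By the projection formula $\HH^0(\phi^*A\otimes B)=\HH^0(A\otimes\phi_*B)$, and $\nu$ is identified with the standard multiplication $\HH^0(A)\otimes \HH^0(E)\to \HH^0(A\otimes E)$ for $E:=\phi_*B$, a rank $m^2$ bundle of degree $3b$ on $C$. From the evaluation sequence $0\to K_A\to \HH^0(A)\otimes\mathcal{O}_C\to A\to 0$ defining the syzygy bundle $K_A$ (rank $3a-1$, degree $-3a$), tensoring with $E$ and taking cohomology reduces surjectivity of $\nu$ to $\HH^1(K_A\otimes E)=0$, since $\HH^1(E)=\HH^1(B)=0$ because $\deg B>0$. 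To establish this vanishing I invoke slope semistability: $K_A$ is stable by Butler's theorem (which applies as $\deg A=3a\ge 3=2g+1$), while $E$ is semistable because $\phi^*E=\bigoplus_{g\in\ker\phi}t_g^*B$ is a direct sum of line bundles of common degree $3b$, hence semistable, and semistability descends under the finite \'etale cover $\phi$. In characteristic zero the tensor product of semistable bundles is semistable, so $K_A\otimes E$ is semistable of slope
\[
-\frac{3a}{3a-1}+\frac{3b}{m^2}=\frac{3(3ab-am^2-b)}{m^2(3a-1)},
\]
which is positive exactly when $\frac{1}{a}+\frac{m^2}{b}<3$. By Atiyah's classification a semistable bundle of positive slope on an elliptic curve is a direct sum of indecomposable bundles each of positive degree, each of which has vanishing $\HH^1$, so $\HH^1(K_A\otimes E)=0$ as required. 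The main obstacle is this chain of vector-bundle inputs on the elliptic curve --- Butler's stability theorem for $K_A$, semistability descent under $\phi$, and preservation of semistability under tensor products in characteristic zero --- which together make $\frac{1}{a}+\frac{m^2}{b}<3$ the precise cutoff for $\nu$ to be surjective.
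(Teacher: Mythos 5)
Your argument is correct, and for the decisive step it takes a genuinely different route from the paper. You both handle the descent from $\Proj^2\times\Proj^2$ to $C\times C$ in essentially the same way (you via the Koszul resolution and K\"unneth, the paper via the same short exact sequences plus Kodaira vanishing), and the dimension count via Riemann--Roch on $X_T\cong C$ is identical. The difference is in proving surjectivity of $\HH^0(C\times C,\mathcal{O}(a,b))\to\HH^0(X_T,\mathcal{O}(a,b))$: the paper stays on the abelian surface $C\times C$, computes the intersection numbers $(1,0)\cdot X_T=3m^2$, $(0,1)\cdot X_T=3$, $X_T^2=0$, and applies Kawamata--Viehweg to $\mathcal{O}_{C\times C}((a,b)-X_T)$, the hypothesis $\frac1a+\frac{m^2}{b}<3$ appearing as $\bigl((a,b)-X_T\bigr)^2>0$; you instead identify $X_T$ with the graph of the degree-$m^2$ map $\phi$, reduce to the multiplication map $\HH^0(A)\otimes\HH^0(\phi_*B)\to\HH^0(A\otimes\phi_*B)$ on the elliptic curve itself, and get $\HH^1(K_A\otimes\phi_*B)=0$ from semistability (Butler for $K_A$ --- semistability, valid for degree $\ge 2g$, would already suffice; \'etale-Galois descent for $\phi_*B$ via $\phi^*\phi_*B\cong\bigoplus_g t_g^*B$; tensor products of semistables in characteristic zero; Atiyah or Serre duality for the vanishing), with the same inequality now appearing as positivity of the slope of $K_A\otimes\phi_*B$. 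Your route trades the surface vanishing theorem and nefness discussion for heavier vector-bundle input on the curve, and it is pleasant that the two mechanisms produce exactly the same numerical threshold; do note in passing that the remark $\HH^1(E)=0$ is not actually needed for the cohomology sequence you use, and that all semistability statements should be read after base change to $\overline{\Q}$, which is harmless since surjectivity and dimensions are insensitive to flat base extension.
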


\begin{proof}
We make repeated use of the following standard reasoning.  
Suppose that $Y$ is a variety, that 
$D \subset Y$ is an effective divisor on $Y$, and that 
$\mathcal{L}$ is a line bundle on $Y$.  There is a 
short exact sequence 
\begin{equation} \label{restri}
0 \to \mathcal{L} (-D) \to \mathcal{L} \to \mathcal{L}|_{D} \to 0
\end{equation}
of sheaves on $Y$.  From the long exact cohomology sequence 
associated to~(\ref{restri}), we deduce that 
the restriction of global sections 
\[
\HH^0(Y , \mathcal{L}) \to \HH^0(D , \mathcal{L}|_{D})
\]
is surjective if 
the cohomology group $\HH^1(Y , \mathcal{L}(-D))$ vanishes.  In our 
argument, the variety 
$Y$ is always a subvariety of $\Proj^2 \times \Proj^2$ and the line 
bundle $\mathcal{L}$ is the restriction 
to $Y$ of the line bundle $\mathcal{O}_{\Proj^2 \times \Proj^2}(a,b)$ 
of bi-homogeneous polynomials 
of bi-degree $(a,b)$.  The vanishing of $\HH^1(Y , \mathcal{L}(-D))$ 
is a consequence of the 
Kodaira Vanishing Theorem (\cite[Remark~7.15]{Ha}) or of the 
Kawamata--Viehweg Vanishing Theorem (\cite{Ka,Vi}).

\noindent
{\bf Reduction one: from $\mathbb{P}^2 \times \mathbb{P}^2$ to $C \times \mathbb{P}^2$.}
The ideal of functions on $\mathbb{P}^2 \times \mathbb{P}^2$ that
vanish on $C \times \mathbb{P}^2$ is generated by the degree three homogeneous
polynomial $F$ in the coordinates of the first factor $\mathbb{P}^2$.  Thus among the functions bi-homogeneous
of bi-degree $(a,b)$ on $\mathbb{P}^2 \times \mathbb{P}^2$, the ones vanishing on $C \times \mathbb{P}^2$ are the
functions bi-homogeneous of bi-degree $(a-3,b)$.  Therefore the sequence~(\ref{restri}) becomes
$$
0 \to \mathcal{O}_{\mathbb{P}^2 \times \mathbb{P}^2}(a-3,b) \to \mathcal{O}_{\mathbb{P}^2 \times \mathbb{P}^2}(a,b) \to 
\mathcal{O}_{C \times \mathbb{P}^2} (a,b) \to 0
$$
in this case.  The vanishing of $\HH^1(\mathbb{P}^2 \times \mathbb{P}^2 , \mathcal{O}_{\mathbb{P}^2 \times \mathbb{P}^2}(a-3,b))$ 
under the assumptions $a>0$ and $b>-3$ follows by the Kodaira Vanishing Theorem.

\noindent
{\bf Reduction two: from $C \times \mathbb{P}^2$ to $C \times C$.}
We argue as above to find that if the inequalities $a>0$ and $b>0$ hold, then 
the cohomology group $\HH^1(C \times \mathbb{P}^2 , \mathcal{O}_{C \times \mathbb{P}^2}(a,b-3))$ vanishes.

\noindent
{\bf Reduction three: from $C \times C$ to $X_T$.}
The curve $X_T$ is a divisor on $C \times C$ and the
sequence~(\ref{restri}) becomes 
$$
0 \to \mathcal{O}_{C \times C} \bigl( (a,b) - X_T \bigr) \to \mathcal{O}_{C \times C}(a,b) \to \mathcal{O}_{X_T} (a,b) \to 0
$$
in this case.  The surface $C \times C$ is an abelian surface and therefore every effective divisor on $C \times C$ is 
nef.  Hence, the vanishing of the group $\HH^1 \bigl( C \times C , \mathcal{O}_{C \times C} \bigl( (a,b) - X_T \bigr) \bigr)$ is a 
consequence of the Kawamata-Viehweg Vanishing Theorem if the inequalities $(0,1) \cdot \bigl( (a,b) - X_T \bigr) > 0$ and 
$\bigl( (a,b) - X_T \bigr) ^2 > 0$ hold.  We have 
\begin{itemize}
\item $(1,0) \cdot (0,1) = 9$ since a general line in $\Proj^2$ intersects $C$ in three points; 
\item $(1,0) ^2 = (0,1) ^2 = 0$ since a general pair of lines in $\Proj^2$ intersects in a point not on $C$;
\item $(1,0) \cdot X_T = 3m^2$ since for a fixed point $P$ on $C$ there are $m^2$ pairs $(P,Q)$ in $X_T$;
\item $(0,1) \cdot X_T = 3$ since for a fixed point $Q$ on $C$ there is a unique pair $(P,Q)$ in $X_T$;
\item $(X_T)^2 = 0$ since for all $T, T' \in C$ the curves $X_T$ and $X_{T'}$ are algebraically equivalent and 
if $(m-1)([T]-[T']) \neq 0$, then the curves $X_T$ and $X_{T'}$ are disjoint.
\end{itemize}
Thus the equalities 
$$
(0,1) \cdot \bigl( (a,b) - X_T \bigr) = 3 (3a - 1)
$$
and 
$$
\bigl( (a,b) - X_T \bigr) ^2 = 6ab \left( 3 - \frac{1}{a} - \frac{m^2}{b} \right)
$$
hold, and the first part of the lemma follows.

To compute the dimension of $\HH^0 \bigl( X_T,\mathcal{O}_{X_T} (a,b) \bigr)$ we observe that the projection of the curve 
$X_T \subset C \times C$ onto the second factor is an isomorphism,
and hence the curve $X_T$ is smooth of genus one.  Moreover, using the intersection numbers computed above, the 
line bundle $\mathcal{O}_{X_T} (a,b)$ on $X_T$ has degree 
\[
X_T \cdot (a,b) = 3(m^2a+b)>0.
\]
It follows that the group 
$\HH^1 \bigl( X_T,\mathcal{O}_{X_T} (a,b) \bigr)$ vanishes and we 
therefore conclude from the Riemann-Roch formula 
that the dimension of $\HH^0 \bigl( X_T,\mathcal{O}_{X_T} (a,b)
\bigr)$ 
is $3(m^2a+b)$.
\end{proof}

\begin{proof}[Proof of Lemma~\ref{DT}]
We use the notation introduced in the discussion above the statement of Lemma~\ref{DT}.  The projection map $I_1 \to I_1/I_2$ 
corresponds to the restriction of global sections 
$$
\HH^0 \bigl( \Proj^2 \times \Proj^2 , \mathcal{O}_{\Proj^2 \times \Proj^2} (a,b) \bigr) \to \HH^0 \bigl( X , \mathcal{O}_X (a,b) \bigr) . 
$$
By Lemma~\ref{geo} the vector space $I_1/I_2$ has dimension $3(m^2a+b)$ and is spanned by bi-homogeneous monomials of 
bi-degree $(a,b)$.  This suffices for the lemma.
\end{proof}

\appendix
\section*{Appendix}

Our goal in this appendix is to prove the following result.
\begin{prop}\label{P2}
For any smooth plane cubic curve $C$ we have
\[ N(B)\ll (\log B)^{1+r/2}\]
with an absolute implied constant, where $r$ is the rank of ${\rm Jac}(C)$.
\end{prop}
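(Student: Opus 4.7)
The plan is to reduce $N(B)$ to a count of lattice points in the Mordell--Weil group of $\Jac(C)$ and then to control this count using David's Lehmer-type lower bound on the canonical height. If $C(\Q)$ is empty then $N(B)=0$, so fix a base point $O\in C(\Q)$ and identify $C$ with $E:=\Jac(C)$; by Proposition~\ref{trenta} one may further assume $\|F\|\ll B^{30}$, since otherwise $N(B)\le 9$ and the conclusion is immediate. The standard comparison between the naive projective height and the canonical height on a Weierstrass model of $E$ associated to $F$ then yields
\[
\hat{h}(P)\ll \log H(P)+\log\|F\|\ll \log B
\]
for every $P\in E(\Q)$ with $H(P)\le B$.

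By Mordell--Weil and Mazur, $E(\Q)=T\oplus\Lambda$ with $|T|\le 16$ and $\Lambda\cong\Z^{r}$, and $\hat{h}$ restricts to a positive definite quadratic form on $\Lambda$. Consequently
\[
N(B)\ll \#\bigl\{v\in\Lambda :\, \hat{h}(v)\le c\log B\bigr\}
\]
for a suitable absolute constant $c>0$, so the task reduces to a lattice-point count in the rank-$r$ lattice $(\Lambda,\hat{h})$.

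The key input is David's Corollary~1.6, which provides a lower bound for the canonical height of non-zero elements of $\Lambda$ by a negative power of $\log\|F\|$, and hence by a negative power of $\log B$ in our range. Feeding this into a standard lattice-point count in the ball $\{v\in\Lambda:\hat{h}(v)\le c\log B\}$ is what produces the required estimate $N(B)\ll(\log B)^{1+r/2}$.

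The main obstacle lies in the lattice-counting step itself: a direct application of David's pointwise bound only to the first successive minimum of $(\Lambda,\hat{h})$ gives a count of size $(\log B)^{r}$, which is too weak once $r\ge 3$. The argument must exploit David's result more globally, most naturally via a regulator-type lower bound on the covolume of $\Lambda$, so that the dominant contribution becomes the volume term of order $T^{r/2}/\sqrt{R_E}$ with $T=c\log B$, and a lower bound on the regulator $R_E$ of the shape $R_E\gg(\log B)^{-2}$ contributes exactly the extra factor of $\log B$ beyond the N\'{e}ron-type main term $(\log B)^{r/2}$.
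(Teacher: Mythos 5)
Your reduction is exactly the paper's: assume $\|F\|\ll B^{30}$ via Proposition~\ref{trenta}, compare naive and canonical heights to get $\hat h(P)\ll\log B$ for the points counted, and reduce to counting points of the Mordell--Weil lattice $(\Lambda,\hat h)$ in a ball of radius $\asymp\sqrt{\log B}$, torsion being absorbed into the constant. The gap is in the counting step. You correctly observe that using only a Lehmer-type lower bound on the first minimum loses too much, but the proposed repair --- bounding the count by the volume term $T^{r/2}/\sqrt{R_E}$ together with a regulator bound $R_E\gg(\log B)^{-2}$ --- does not work. The number of lattice points in a ball is not controlled by the covolume alone: if the first successive minimum $M_1$ is tiny and is compensated by very large later minima, the regulator can be as large as you like while the ball already contains $\asymp\sqrt{T}/M_1$ points on the line spanned by the shortest vector, which can exceed any fixed power of $\log B$. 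So the assertion that ``the dominant contribution becomes the volume term'' is unjustified, and in fact what one needs is a lower bound on every partial product $M_1\cdots M_k$ (equivalently, on the individual successive minima), which a bound on the full product $R_E$ cannot supply.

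This is precisely the form in which the paper uses David's Corollary~1.6: it gives lower bounds for the first few successive minima of $\sqrt{\hat h}$ separately, namely $M_1\gg(\log|D|)^{-7/16}$, $M_2\gg(\log|D|)^{-1/6}$, $M_3\gg(\log|D|)^{-7/96}$, $M_4\gg(\log|D|)^{-1/40}$, and crucially $M_5\gg(\log|D|)^{+1/240}$. Combined with Davenport's lemma, which bounds the count by $\prod_{j\le r}\max\{1,4\sqrt{c\log B}/M_j\}$, each index $j\ge5$ contributes at most $\sqrt{\log B}$ (for $|D|$ large), while the first four indices contribute $(\log B)^{2}(\log|D|)^{7/16+1/6+7/96+1/40}$; since this exponent sum is $<1$ and $\log|D|\ll\log B$, one gets $(\log B)^{1+r/2}$. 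Your write-up is missing both this stronger reading of David's result (successive minima, not just a height lower bound or a regulator bound) and the Davenport-type counting lemma that converts it into the desired estimate; as it stands the final step would fail for skew Mordell--Weil lattices.
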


For the proof we adapt the arguments of Heath-Brown 
\cite[\S 4]{asterisque}.  According to our Proposition~\ref{trenta} it
will suffice to assume that $\|F\|\ll B^{30}$.  Thus the fourth and
fifth displayed formulae on page~22 of~\cite{asterisque} and
the third displayed formula on page~23 produce a bound
\[
N(B) \ll \# \bigl\{ (n_1,\ldots,n_r)\in\Z^r:\, Q(n_1,\ldots,n_r)\le c\log B \bigr\}
\]
for a suitable absolute constant $c$, where $Q$ is the quadratic form
corresponding to the canonical height function on ${\rm Jac}(C)$.  We
now call on Corollary~1.6 of David~\cite{david}, which shows that if
$D$ is the discriminant of ${\Jac}(C)$ then the
successive minima $M_j$ of $\sqrt{Q}$ satisfy 
\[
M_1\gg (\log|D|)^{-7/16},\;\;\;
M_2\gg (\log|D|)^{-1/6},\;\;\;
M_3\gg (\log|D|)^{-7/96},
\]
\[
M_4\gg (\log|D|)^{-1/40},\;\;\;\mbox{and}\;\;\;
M_5\gg (\log|D|)^{1/240}.
\]
Note that David's result refers to the successive minima for $Q$, while
we have given the corresponding results for $\sqrt{Q}$.

To estimate the number of integer vectors with $Q(n_1,\ldots,n_r)\le
R$, say, we may apply Lemma~1 of Davenport~\cite{Dav}, with the
distance function $\sqrt{Q}$.  This yields the bound
\[
\# \bigl\{ (n_1,\ldots,n_r)\in\Z^r:\,  Q(n_1,\ldots,n_r)\le R \bigr\} \le
\prod_{j\le r}\max \left\{ 1, 4\frac{\sqrt{R}}{M_j} \right\}.
\]
In our case we deduce that
\begin{equation}\label{app1}
N(B) \ll \prod_{j\le r} \max \left\{ 1 , 4\frac{\sqrt{c\log B}}{M_j} \right\} .
\end{equation}
In view of our lower bound for $M_5$ there is an absolute constant
$D_0$ such that
\[
4\frac{\sqrt{c\log B}}{M_j}\le 4\frac{\sqrt{c\log B}}{M_5} \le \sqrt{\log B}
\] 
when $|D|\ge D_0$ and $j\ge 5$.  When $r\ge 4$ it then follows that
\begin{eqnarray*}
N(B)&\ll& 
(\log B)^{(r-4)/2}\prod_{j\le 4} \max \left\{1, \frac{\sqrt{\log B}}{M_j}\right\} \\
&\ll& (\log|D|)^{7/16+1/6+7/96+1/40}(\log B)^{r/2}
\end{eqnarray*}
for $|D|\ge D_0$.  When $|D|\le D_0$ the rank $r$ is bounded and the 
same result follows at once from~(\ref{app1}).  Finally, since $D$ is 
bounded by a power of $B$ and 
\[
\frac{7}{16}+\frac{1}{6}+\frac{7}{96}+\frac{1}{40}<1
\]
we deduce that 
\[
N(B)\ll(\log B)^{1+r/2},
\]
as required.

We remark that one may show in the same way that if $r$ is
sufficiently large ($r\ge 244$ say) then the clean result
\[
N(B)\ll(\log B)^{r/2}
\]
holds.

\end{document}